\newcommand{\eqlab}[1]{\begin{equation}  \begin{aligned}#1 \end{aligned}\end{equation}} %this substitutes \begin{equation*}\begin{aligned} ... 
\newcommand{\bgs}[1]{\begin{equation*} \begin{aligned}#1\end{aligned}\end{equation*}} %this substitutes \begin{equation*}\begin{aligned} ...
\newcommand{\R}{\ensuremath{\mathbb{R}}}
\newcommand{\N}{\ensuremath{\mathbb{N}}}
\DeclareMathOperator{\diam}{diam}
\newtheorem{theorem}{Theorem}[section]
\newtheorem{lemma}[theorem]{Lemma}
\newtheorem{corollary}[theorem]{Corollary}
\newtheorem{proposition}[theorem]{Proposition}
\theoremstyle{definition}
\newtheorem{example}[theorem]{Example}
\theoremstyle{remark}
\newtheorem{remark}[theorem]{Remark}
\numberwithin{equation}{section}
\begin{document}

% \title[short text for running head]{full title}
\title[Obstructions to extension]{Obstructions to extension of Wasserstein distances for variable masses}

%    Only \author and \address are required; other information is
%    optional.  Remove any unused author tags.

%    author one information
% \author[short version for running head]{name for top of paper}
\author{Luca Lombardini}
\address{Universit\`a degli Studi di Padova, Dipartimento di Matematica ``Tullio Levi-Civita'', Via Trieste 63, 35121 Padova, Italy}
%\curraddr{}
%\email{}
%\thanks{}

%    author two information
\author{Francesco Rossi}
\address{Universit\`a degli Studi di Padova, Dipartimento di Matematica ``Tullio Levi-Civita'', Via Trieste 63, 35121 Padova, Italy}
%\curraddr{}
%\email{}
%\thanks{}

%    \subjclass is required.
\subjclass[2020]{28A33, 49Q22}
\keywords{Metrization of weak convergence, convergence of measures, unbalanced optimal transportation}
%\date{}

%\dedicatory{}

%    "Communicated by" -- provide editor's name; required.
%\commby{}

%    Abstract is required.
\begin{abstract}
	We study the possibility of defining a distance on the whole space of measures, with the property that the distance between two measures having the same mass is the Wasserstein distance, up to a scaling factor. We prove that, under very weak and natural conditions, if the base space is unbounded, then the scaling factor must be constant, independently of the mass. Moreover, no such distance can exist, if we include the zero measure. Instead, we provide examples with non-constant scaling factors for the case of bounded base spaces.
\end{abstract}

\maketitle

%    Text of article.

\section{Introduction and main results}\label{Main_res_Sect}

Wasserstein distances are crucial modern tools in mathematics, raising an enormous interest from the community of mathematical analysis (see e.g. \cite{villani2,AGS,caffarellireg}). Their applications are also extremely varied, ranging from crowd dynamics \cite{CPT,maury2}, to economics \cite{matthes,galichon}, to computer science \cite{peyre}. 

Yet, Wasserstein distances have several limitations, the most apparent being that they are defined between measures with the same mass only. This issue led to the definition of different possible generalizations of the Wasserstein distance between measures with different masses. The first attempt in this direction, described in \cite{figalli-gigli}, was related to the heat equation on a domain $\Omega$ with Dirichlet boundary condition, that clearly does not preserve mass. A second attempt, see \cite{figalli-arma}, was given by the so-called optimal partial transport problem: given two measures $\mu,\nu$ with different masses, one fixes a smaller mass and computes the Wasserstein distance between optimal submeasures.

A third group of contributions, known under the name of {\it generalized Wasserstein distance} \cite{gw,gw2}, was based on the optimization of a mixed Wasserstein-$L^1$ cost, in which a part of the measure is transported (with Wasserstein cost) and the remaining part is removed (with a $L^1$ cost). This contribution was then followed by several ``generalizations of the generalization'', see \cite{chizat,monsaingenon,liero}.

One of the main drawbacks of the distances given above is that they do not coincide, in general, with the standard Wasserstein distance when the two measures have the same mass. The only exception is clearly the partial optimal transportation problem when one chooses to transport the whole mass. This observation raises a natural question: {\it is it possible to define a distance between measures of different masses, that coincides with the Wasserstein distance in the case measures have the same mass?}

Our main result provides a negative answer, under quite general hypotheses. We show that it is not possible to extend the $p$-Wasserstein distance $W_p$ to a distance function $d$ defined on the whole space of measures in $\R^n$ and satisfying the property
\begin{equation}\label{fiber_condition_Wp}
d(\mu_1,\mu_2)=f(m)W_p\Big(\frac{\mu_1}{m},\frac{\mu_2}{m}\Big),
\end{equation}
whenever the mass $|\mu|:=\int_{\R^n}d\mu$ satisfies $|\mu_1|=|\mu_2|=m>0$.\\

More precisely, let $\mathcal M_c(\R^n)$ be the space of non-negative measures in $\R^n$ having finite mass and compact support and $\mathcal M^*_c(\R^n)$ the space of measures with positive finite mass and compact support. Then, $\mathcal M_c(\R^n)=\{0\}\cup \mathcal M^*_c(\R^n)$, where $0$ denotes the zero measure. Let also $\mathcal P_c(\R^n)$ be the space of probability measures on $\R^n$, i.e. of mass 1, having compact support. We use the notation $\R_{>0}$ for the space of positive real numbers $(0,+\infty)$, endowed with the Euclidean distance.
We stress that, as sets, we can identify
\eqlab{\label{set_isom}
\mathcal M^*_c(\R^n)\simeq \R_{>0}\times\mathcal P_c(\R^n),
}
via the bijection $\mu\mapsto(|\mu|,\mu/|\mu|)$.

We investigate the possibility of defining a distance function $d$ on $\mathcal M_c(\R^n)$ satisfying \eqref{fiber_condition_Wp}, i.e. with the property that on each fiber $\{m\}\times\mathcal P_c(\R^n)$ the distance $d$ coincides with the $p$-Wasserstein distance, up to a multiplicative scaling factor depending on $m>0$ only.

Our main result is a strong obstruction to a general definition of such a distance, as it provides a very weak condition forcing the function $f$ in \eqref{fiber_condition_Wp} to be constant. Indeed, we have the following first result.

\begin{theorem}[Unbounded space]\label{Teo_pWas}
	Let $p\in[1,+\infty)$ and let $d_\mathcal M:\mathcal M^*_c(\R^n)\times \mathcal M^*_c(\R^n)\to[0,+\infty)$ be a distance function satisfying the following two properties:
	\begin{enumerate}[(i)]
		\item there exists a function $f:\R_{>0}\to \R_{>0}$ such that
		\begin{equation*}%\label{in_fiber_cond_eq}
			d_\mathcal M(m\mu_1,m\mu_2)=f(m)W_p(\mu_1,\mu_2),
		\end{equation*}
		for every $m\in \R_{>0}$ and every $\mu_1,\mu_2\in \mathcal P_c(\R^n)$.
		\item There exists an unbounded subset $\Sigma\subseteq\R^n$ such that for every $m_0\in \R_{>0}$ there exist a radius $r>0$ and a positive constant $C>0$, both depending only on $m_0$, with
		\begin{equation}\label{in_unif_mass_cost_eq}
			\sup_{x\in \Sigma}d_\mathcal M(m_0\delta_x,m\delta_x)\leq C
		\end{equation}
		for every $m\in (m_0-r,m_0+r)$, where $\delta_x$ is the Dirac delta centered at $x$.
	\end{enumerate}
	Then, $f\equiv\lambda$, for some positive constant $\lambda>0$.
\end{theorem}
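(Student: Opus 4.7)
The plan is to exploit the triangle inequality between two Dirac deltas placed far apart on the unbounded set $\Sigma$, comparing the cost at mass $m_0$ with the cost at a nearby mass $m$.

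Fix $m_0\in\R_{>0}$ and let $r,C>0$ be the constants provided by hypothesis (ii). For any $m\in(m_0-r,m_0+r)$ and any pair of points $x,y\in\Sigma$, the triangle inequality for $d_\mathcal{M}$ gives
\begin{equation*}
d_\mathcal{M}(m\delta_x,m\delta_y)\leq d_\mathcal{M}(m\delta_x,m_0\delta_x)+d_\mathcal{M}(m_0\delta_x,m_0\delta_y)+d_\mathcal{M}(m_0\delta_y,m\delta_y).
\end{equation*}
Using hypothesis (i) together with the identity $W_p(\delta_x,\delta_y)=|x-y|$, the middle term equals $f(m_0)|x-y|$ and the left-hand side equals $f(m)|x-y|$. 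The outer two terms are each bounded by $C$ thanks to \eqref{in_unif_mass_cost_eq}. Hence
\begin{equation*}
f(m)|x-y|\leq 2C+f(m_0)|x-y|,
\end{equation*}
and by symmetry the same inequality holds with $f(m)$ and $f(m_0)$ swapped, so $|f(m)-f(m_0)|\,|x-y|\leq 2C$.

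Now I would use that $\Sigma$ is unbounded: fixing one point $y_0\in\Sigma$ and picking a sequence $x_k\in\Sigma$ with $|x_k|\to+\infty$ forces $|x_k-y_0|\to+\infty$. Dividing the previous inequality by $|x_k-y_0|$ and letting $k\to\infty$ yields $f(m)=f(m_0)$ for every $m\in(m_0-r,m_0+r)$. Thus $f$ is locally constant on $\R_{>0}$.

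Since $\R_{>0}$ is connected, a locally constant function on it is constant, so $f\equiv \lambda$ for some $\lambda\in\R_{>0}$ (positivity inherited from the codomain of $f$). The whole argument is short and the only place where something delicate might be swept under the rug is making sure one can actually realize arbitrarily large $|x-y|$ within $\Sigma$, which is exactly the content of unboundedness: the main subtlety is that the hypothesis (ii) is tailored so that the ``mass adjustment cost'' $d_\mathcal{M}(m_0\delta_x,m\delta_x)$ stays bounded \emph{uniformly in $x\in\Sigma$}, which is precisely what lets the spatial term $|x-y|$ dominate as it is sent to infinity.
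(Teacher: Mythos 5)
Your proof is correct and follows essentially the same route as the paper: the paper first proves a general lemma for product metric spaces (triangle inequality across fibers, uniform bound on the mass-change cost, unboundedness of the fiber metric forcing local constancy, then connectedness of $\R_{>0}$) and then specializes to $\{m\delta_x : x\in\Sigma,\ m>0\}$, whereas you carry out the identical argument directly on the Dirac deltas. The only cosmetic difference is that you compare $m$ with $m_0$ directly (bound $2C$) while the paper compares two arbitrary masses near $m_0$ (bound $4C$); this changes nothing.
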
	
	
	As a consequence, it is not possible to extend such a distance to the whole space $\mathcal M_c(\R^n)$, i.e. when the zero measure is added.
	
\begin{corollary}[Unbounded space with zero]\label{Cor_pWas}
	Let $d_\mathcal M$ be a distance function on $\mathcal M^*_c(\R^n)$ as in Theorem \ref{Teo_pWas}. Then, there exists no distance function $d_{\mathcal M_0}$ defined on $\mathcal M_c(\R^n)$, which agrees with $d_\mathcal M$ on $\mathcal M^*_c(\R^n)$, and such that either
	\begin{equation}\label{bded_0_eq}
		\sup_{x\in\Sigma}d_{\mathcal M_0}(0,m_0\delta_x)<+\infty,
	\end{equation}
	for some $m_0\in\R_{>0}$, or
	\begin{equation}\label{destr_mass_eq}
	\lim_{m\searrow0}d_{\mathcal M_0}(0,m\mu_i)=0,
	\end{equation}
	at least for two different $\mu_1\not=\mu_2\in\mathcal P_c(\R^n)$.
\end{corollary}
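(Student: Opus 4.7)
The plan is to reduce the corollary to a direct application of the triangle inequality, using Theorem \ref{Teo_pWas} as a black box. Indeed, the hypothesis places $d_\mathcal M$ in the setting of Theorem \ref{Teo_pWas}, so the multiplicative factor in \eqref{fiber_condition_Wp} is forced to be a constant $\lambda>0$; that is,
\begin{equation*}
d_\mathcal M(m\mu_1,m\mu_2)=\lambda\,W_p(\mu_1,\mu_2)\qquad\text{for all }m>0,\ \mu_1,\mu_2\in\mathcal P_c(\R^n).
\end{equation*}
Positivity of $\lambda$ follows by evaluating on $\mu_1=\delta_x,\mu_2=\delta_y$ with $x\ne y$.

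Suppose first that \eqref{bded_0_eq} holds, so there exist $m_0>0$ and $C>0$ with $d_{\mathcal M_0}(0,m_0\delta_x)\le C$ for every $x\in\Sigma$. For any $x,y\in\Sigma$, the triangle inequality in $(\mathcal M_c(\R^n),d_{\mathcal M_0})$ combined with agreement on $\mathcal M^*_c(\R^n)$ yields
\begin{equation*}
\lambda\,|x-y|=\lambda\,W_p(\delta_x,\delta_y)=d_{\mathcal M}(m_0\delta_x,m_0\delta_y)\le d_{\mathcal M_0}(m_0\delta_x,0)+d_{\mathcal M_0}(0,m_0\delta_y)\le 2C.
\end{equation*}
Since $\Sigma$ is unbounded, one can choose $x,y\in\Sigma$ with $|x-y|$ arbitrarily large, a contradiction.

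Suppose instead that \eqref{destr_mass_eq} holds for two distinct $\mu_1,\mu_2\in\mathcal P_c(\R^n)$. Then $W_p(\mu_1,\mu_2)>0$, while for every $m>0$ the triangle inequality gives
\begin{equation*}
\lambda\,W_p(\mu_1,\mu_2)=d_{\mathcal M}(m\mu_1,m\mu_2)\le d_{\mathcal M_0}(m\mu_1,0)+d_{\mathcal M_0}(0,m\mu_2).
\end{equation*}
Letting $m\searrow 0$, the right-hand side vanishes by assumption, yielding $\lambda\,W_p(\mu_1,\mu_2)=0$, again a contradiction.

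There is no real obstacle in the argument: the entire difficulty is concentrated in Theorem \ref{Teo_pWas}, whose conclusion that $f$ is constant is what makes both bounds above rigid enough to be violated. Once that rigidity is in hand, the corollary is essentially a one-line triangle-inequality computation in each case.
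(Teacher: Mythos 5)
Your argument is correct and is essentially the paper's own proof, just unfolded: the paper delegates case \eqref{bded_0_eq} to Lemma \ref{bded_lem} and case \eqref{destr_mass_eq} to the argument of Corollary \ref{cor_to0}, and both of those are precisely the triangle-inequality-through-the-zero-measure computations you carry out directly (with the rigidity $f\equiv\lambda$ from Theorem \ref{Teo_pWas} being the essential input in the second case). The only cosmetic difference is that for case \eqref{bded_0_eq} you do not actually need the full conclusion of Theorem \ref{Teo_pWas}, since only the single fiber at $m_0$ enters and $f(m_0)>0$ already suffices there.
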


A few observations are in order. First of all, simple examples of distance functions on $\mathcal M^*_c(\R^n)$ satisfying \eqref{fiber_condition_Wp}, with $f$ constant, are given by the product metrics on the decomposed space \eqref{set_isom}, see Example \ref{exe_prod} below.
Roughly speaking, these distances measure separately the costs of changing mass and of transporting two equal masses, then sum up the two costs.

We also stress that condition \eqref{in_unif_mass_cost_eq} is very weak. It is an assumption regarding only the point masses located within $\Sigma$, and it represents the fact that the cost of ``destroying'' or ``creating'' mass, going from $m\delta_x$ to $m_0\delta_x$, is uniformly bounded with respect to the position $x\in\Sigma$, locally around $m_0$.
Furthermore, we point out that \eqref{in_unif_mass_cost_eq} is implied by other very natural conditions, like the invariance of $d_\mathcal M$ with respect to isometries together with the compatibility of $d_\mathcal M$ with weak convergence when varying the mass, see Proposition \ref{final_prop} for the precise statement.

In Theorem \ref{Teo_pWas} and Corollary \ref{Cor_pWas}, the specific properties of the $p$-Wasserstein distance actually play no role, except for the fact that $W_p$ is unbounded on $\mathcal M^*_c(\R^n)$---and in particular on the subset consisting of Dirac measures with base points belonging to $\Sigma$, an unbounded subset of $\R^n$.
As a consequence, Theorem \ref{Teo_pWas} and Corollary \ref{Cor_pWas} can be extended to much more general settings: in place of $\R^n$ we can consider a Polish space $(X, d_X)$ and in place of the Wasserstein distance $W_p$ we can consider any distance function $d_\mathcal P$ defined on the space of compactly supported probability measures $\mathcal P_c(X)$.

If $d_\mathcal P$ is unbounded, we can then translate Theorem \ref{Teo_pWas} to this setting in a straightforward way: in point (ii) simply consider a subset $S\subseteq\mathcal P_c(X)$ which is unbounded with respect to $d_\mathcal P$, in place of the set $\{\delta_x\,:\,x\in\Sigma\}$.

On the contrary, if $d_\mathcal P$ is bounded---e.g., if $d_\mathcal P$ is the bounded Lipschitz distance in $\mathcal P_c(\R^n)$ (see, e.g., \cite{Villani09,gw2}) or if $d_\mathcal P$ is the $p$-Wasserstein distance in $\mathcal P_c(X)$, when $(X,d_X)$ is bounded---the picture is completely different.

\begin{proposition}[Bounded space]\label{general_Teo}
	Let $(X,d_X)$ be a Polish space and let $d_\mathcal P:\mathcal P_c(X)\times \mathcal P_c(X)\to[0,+\infty)$ be a bounded distance function. Let $f:\R_{>0}\to\R_{>0}$ be a Lipschitz, increasing, function such that
	\begin{equation*}
		\sup_{\substack{m_1,m_2\in\R_{>0}\\ m_1\not=m_2}}\frac{|f(m_1)-f(m_2)|}{|m_1-m_2|}\leq\frac{1}{\diam \mathcal P_c(X)}\quad\mbox{and}\quad \lim_{m\searrow0}f(m)=0.
	\end{equation*}
	Then, the function $d_{\mathcal M_0}:\mathcal M_c(X)\times \mathcal M_c(X)\to[0,+\infty)$ defined by setting
	\begin{equation*}
		d_{\mathcal M_0}(m_1\mu_1,m_2\mu_2):=|m_1-m_2|+\min\{f(m_1),f(m_2)\}d_\mathcal P(\mu_1,\mu_2),
	\end{equation*}
	for every $m_1,m_2\in \R_{>0}$ and $\mu_1,\mu_2\in\mathcal P_c(X)$,
	and
	\[
	d_{\mathcal M_0}(0,m\mu)=d_{\mathcal M_0}(m\mu,0):=m,\qquad d_{\mathcal M_0}(0,0):=0,
	\]
	is a distance function on $\mathcal M_c(X)$.
\end{proposition}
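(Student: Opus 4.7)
Symmetry and non-negativity of $d_{\mathcal M_0}$ are built into the definition, so the plan is to check identity of indiscernibles and the triangle inequality. The identity axiom is immediate: since $f>0$, the equality $|m_1-m_2|+\min\{f(m_1),f(m_2)\}\,d_\mathcal P(\mu_1,\mu_2)=0$ forces $m_1=m_2$ and $\mu_1=\mu_2$, while $d_{\mathcal M_0}(0,m\mu)=m=0$ forces $m\mu$ to be the zero measure.

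Before entering the case analysis for the triangle inequality, I would record the useful consequence of the Lipschitz hypothesis combined with $\lim_{m\searrow 0}f(m)=0$: extending $f$ continuously by $f(0):=0$ preserves the Lipschitz constant, giving the uniform bound
\[
f(m)\,d_\mathcal P(\mu,\nu)\leq f(m)\diam\mathcal P_c(X)\leq m,
\]
for every $\mu,\nu\in\mathcal P_c(X)$ and every $m\geq 0$. This bound will take care of every subcase involving the zero measure.

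Next, I would split the triangle inequality by how many of the three measures vanish. The degenerate configurations reduce to elementary scalar inequalities: for instance, when the middle measure is zero one needs $d_{\mathcal M_0}(m_1\mu_1,m_3\mu_3)\leq m_1+m_3$, and by the bound above the left-hand side is at most $|m_1-m_3|+\min\{m_1,m_3\}=\max\{m_1,m_3\}\leq m_1+m_3$. The remaining boundary configurations (one endpoint equal to the zero measure) are settled by the scalar triangle inequality in $\R$.

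The substantive case is the one where all three masses $m_1,m_2,m_3$ are strictly positive. By the symmetry of $d_{\mathcal M_0}$ in its arguments I may assume $m_1\leq m_3$, so that the left-hand weight equals $f(m_1)$. When $m_2\geq m_1$, monotonicity of $f$ forces both right-hand weights to be at least $f(m_1)$, and the triangle inequality for $d_\mathcal P$ closes the estimate (with $|m_1-m_3|\leq|m_1-m_2|+|m_2-m_3|$ handling the mass part). The delicate subcase, which I expect to be the main obstacle, is $m_2<m_1\leq m_3$: here both right-hand weights collapse to $f(m_2)<f(m_1)$, producing a deficit $\bigl(f(m_1)-f(m_2)\bigr)\,d_\mathcal P(\mu_1,\mu_3)$ which, by the Lipschitz hypothesis and $d_\mathcal P(\mu_1,\mu_3)\leq\diam\mathcal P_c(X)$, is at most $m_1-m_2$. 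The resolution is that the mass excess $|m_1-m_2|+|m_2-m_3|-|m_1-m_3|=2(m_1-m_2)$ absorbs this deficit with room to spare, and this is exactly the step that pins down the choice of Lipschitz constant $1/\diam\mathcal P_c(X)$ in the hypotheses.
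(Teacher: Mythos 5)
Your proposal is correct and follows essentially the same route as the paper: the paper reduces the statement to Proposition \ref{bded_prop}, whose proof is exactly your case analysis on the ordering of the masses, with the same use of monotonicity of $f$ in the easy subcase and of the Lipschitz bound $\bigl(f(m_1)-f(m_2)\bigr)\diam\mathcal P_c(X)\leq m_1-m_2$ to absorb the deficit in the delicate one. The only difference is that you explicitly work out the zero-measure configurations via the bound $f(m)\diam\mathcal P_c(X)\leq m$, which the paper delegates to Remark \ref{bded_rmk} as an easy verification.
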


Clearly, the distance function $d_{\mathcal M_0}$ defined in Proposition \ref{general_Teo} is such that
\begin{equation*}%\label{in_fiber_cond_eq}
	d_{\mathcal M_0}(m\mu_1,m\mu_2)=f(m)d_\mathcal P(\mu_1,\mu_2),
\end{equation*}
for every $m\in \R_{>0}$ and every $\mu_1,\mu_2\in \mathcal P_c(X)$.
Moreover,
\[
\sup_{\mu\in \mathcal P_c(X)}d_{\mathcal M_0}(m_1\mu,m_2\mu)=|m_1-m_2|,
\]
for every $m_1,m_2\in[0,+\infty)$, which is a stronger property than assumption (ii) of Theorem \ref{Teo_pWas}, and also implies \eqref{destr_mass_eq} for every $\mu\in\mathcal P_c(\R^n)$.

The rest of the paper is organized as follows.
In Section \ref{Gen_Sec}, we first prove some general results about distance functions defined on the direct product of two metric spaces. Then, in Section \ref{Proof_Sec}, we exploit these results to prove our main theorems, stated here above in Section \ref{Main_res_Sect}. In Section \ref{last_Sect} we prove some further results, which are natural consequences of Theorem \ref{Teo_pWas}, and we provide some examples of meaningful distance functions defined on $\mathcal M_c^*(\R^n)$.

\section{Metrics on product spaces}\label{Gen_Sec}

We begin with the following result, which is a generalization of Theorem \ref{Teo_pWas} to product spaces $M=X\times Y$.

\begin{theorem}\label{Teo1}
	Let $(X,d_X)$ and $(Y,d_Y)$ be two metric spaces, and let $M:=X\times Y$ be equipped with a distance function $d_M:M\times M\to[0,+\infty)$ satisfying the following two properties:
	\begin{enumerate}[(i)]
		\item there exists a function $f:X\to(0,+\infty)$ such that
		\begin{equation}\label{fiber_cond_eq}
			d_M((x,y_1),(x,y_2))=f(x)d_Y(y_1,y_2),
		\end{equation}
		for every $x\in X$ and every $y_1,y_2\in Y$.
		\item For every $x_0\in X$ there exist a radius $r>0$ and a positive constant $C>0$, both depending only on $x_0$, such that
		\begin{equation}\label{unif_mass_cost_eq}
			\sup_{y\in Y}d_M((x_0,y),(x,y))\leq C
		\end{equation}
		for every $x\in B_r^X(x_0)$.
	\end{enumerate}
	If $X$ is connected and $\diam Y=+\infty$, then $f\equiv\lambda$, for some positive constant $\lambda>0$.
\end{theorem}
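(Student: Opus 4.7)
The plan is to first show that $f$ is locally constant on $X$, and then to invoke connectedness to conclude $f \equiv \lambda$ globally.

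\smallskip

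\textbf{Local constancy.} Fix any $x_0 \in X$ and let $r>0$, $C>0$ be the radius and constant provided by hypothesis (ii). For an arbitrary $x \in B_r^X(x_0)$ and any pair $y_1, y_2 \in Y$, I would apply the triangle inequality to the ``rectangle'' with vertices $(x_0,y_1)$, $(x,y_1)$, $(x,y_2)$, $(x_0,y_2)$:
\begin{equation*}
d_M((x_0,y_1),(x_0,y_2)) \;\leq\; d_M((x_0,y_1),(x,y_1)) + d_M((x,y_1),(x,y_2)) + d_M((x,y_2),(x_0,y_2)).
\end{equation*}
Using the fiber condition (i) on the left-hand side and the middle right-hand term, and bounding the two ``horizontal'' terms by $C$ via (ii), we obtain
\begin{equation*}
f(x_0)\, d_Y(y_1,y_2) \;\leq\; 2C + f(x)\, d_Y(y_1,y_2).
\end{equation*}
Swapping the roles of $x_0$ and $x$ in the same rectangle argument (note that the cross-term bound from (ii) is symmetric in the two arguments of $d_M$) yields the reverse estimate, and together they give
\begin{equation*}
|f(x)-f(x_0)|\, d_Y(y_1,y_2) \;\leq\; 2C \qquad\text{for all } y_1,y_2\in Y.
\end{equation*}
Since $\diam Y = +\infty$, we may let $d_Y(y_1,y_2) \to +\infty$, forcing $f(x)=f(x_0)$. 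Hence $f$ is constant on the neighborhood $B_r^X(x_0)$ of $x_0$.

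\smallskip

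\textbf{Globalization via connectedness.} The local constancy means that every level set $f^{-1}(\{\lambda\})$ is open in $X$; its complement is a union of other (open) level sets, hence also open, so each level set is clopen. Because $X$ is connected and $f$ takes at least one value, exactly one level set is nonempty, giving $f\equiv \lambda$ for some $\lambda>0$.

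\smallskip

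\textbf{Expected obstacle.} There is no serious obstruction once one sees the ``rectangle'' identity: the whole argument reduces to the observation that the fiber-metric discrepancy $|f(x)-f(x_0)|\, d_Y(y_1,y_2)$ is controlled by the uniform cross-cost $2C$ of hypothesis (ii), which is meaningful only because we can send $d_Y(y_1,y_2)$ to infinity. The only point to watch is that (ii) produces a radius depending on $x_0$, so the argument genuinely yields \emph{local} constancy on a possibly shrinking neighborhood; connectedness of $X$ is precisely what is needed to promote this to a global statement.
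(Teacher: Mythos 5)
Your proposal is correct and follows essentially the same route as the paper: a triangle-inequality ``rectangle'' estimate combining (i) and (ii) to bound $|f(x)-f(x_0)|\,d_Y(y_1,y_2)$ by a constant, then unboundedness of $Y$ to force local constancy, and connectedness of $X$ to globalize. The only cosmetic difference is that you compare $x$ directly with the center $x_0$ (getting the bound $2C$), whereas the paper compares two arbitrary points of $B_r^X(x_0)$ through $x_0$ (getting $4C$); this changes nothing substantive.
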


\begin{proof}
	Fix $x_0\in X$ and let $r$ and $C$ be as in point (ii). Notice that by \eqref{unif_mass_cost_eq} and the triangle inequality of $d_M$ we have
	\[
	\sup_{y\in Y}d_M((x_1,y),(x_2,y))\leq 2C,
	\]
	for every $x_1,x_2\in B_r^X(x_0)$.
	Thus, exploiting again the triangle inequality of $d_M$ and also \eqref{fiber_cond_eq}, we obtain
	\bgs{
		f(x_1)d_Y(y_1,y_2)&=d_M((x_1,y_1),(x_1,y_2))\\
		&\leq
		d_M((x_1,y_1),(x_2,y_1))+d_M((x_2,y_1),(x_2,y_2))+d_M((x_2,y_2),(x_1,y_2))\\
		&\leq
		4C+f(x_2)d_Y(y_1,y_2),
	}
	for every $x_1,x_2\in B_r^X(x_0)$ and every $y_1,y_2\in Y$. This implies that
	\[
	|f(x_1)-f(x_2)|d_Y(y_1,y_2)\leq 4C,
	\]
	and hence
	\[
	\sup_{x_1,x_2\in B_r^X(x_0)}|f(x_1)-f(x_2)|\leq \frac{4C}{\diam Y}.
	\]
	In particular, if $\diam Y=+\infty$ then $f$ must be constant in $B_r^X(x_0)$, hence it is locally constant in $X$. Therefore, if we also assume $X$ to be connected we conclude that $f$ is globally constant, as claimed.
\end{proof}

An example of such a distance function is given by
\[
d_M((x_1,y_1),(x_2,y_2)):= d_X(x_1,x_2)+ \lambda d_Y(y_1,y_2),
\]
for any fixed $\lambda>0$.

\begin{remark}
On the other hand, if $\diam Y<+\infty$, then we can easily find distances on $M$ satisfying points (i) and (ii) of Theorem \ref{Teo1}, with $f$ not constant. For example, let $f:X\to\R_{>0}$ be such that
\[
M:=\sup_{x\in X}f(x)<+\infty.
\]
Then, the function
\bgs{
	d_M((x_1,y_1),(x_2,y_2)):=\left\{\begin{array}{cc}f(x_1)d_Y(y_1,y_2) & \mbox{if }x_1=x_2,\\
		M\,\diam Y & \mbox{if }x_1\not=x_2,\end{array}\right.
}
is such a distance function.
\end{remark}

We now restrict our attention to the case in which $(X,d_X)=\R_{>0}$.

\begin{proposition}\label{bded_prop}
	Let $(Y,d_Y)$ be a metric space with $\diam Y<+\infty$ and let $f:\R_{>0}\to\R_{>0}$ be a Lipschitz, increasing, function such that
	\begin{equation}\label{global_Lip_hp}
		\sup_{\substack{x_1,x_2\in\R_{>0}\\ x_1\not=x_2}}\frac{|f(x_1)-f(x_2)|}{|x_1-x_2|}\leq\frac{1}{\diam Y}.
	\end{equation}
	Then, the function
	\bgs{
		d_M((x_1,y_1),(x_2,y_2)):=|x_1-x_2|+\min\{f(x_1),f(x_2)\}d_Y(y_1,y_2),
	}
	is a distance function on $M:=\R_{>0}\times Y$ satisfying point (i) of Theorem \ref{Teo1} and
	\begin{enumerate}
		\item[(ii')] for every $x_0\in \R_{>0}$ there exists a radius $r>0$ and a positive constant $C>0$, both depending only on $x_0$, such that
		\begin{equation*}%\label{unif_mass_cost__lip_eq}
			\sup_{y\in Y}d_M((x_1,y),(x_2,y))\leq C |x_1-x_2|,
		\end{equation*}
		for every $x_1,x_2\in (x_0-r,x_0+r)$---which is more restrictive than point (ii) of Theorem \ref{Teo1}.
	\end{enumerate}
\end{proposition}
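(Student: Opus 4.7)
The plan is to verify, in order, three claims: that $d_M$ satisfies the axioms of a distance, that property (i) of Theorem \ref{Teo1} holds, and that the sharper variant (ii') holds. Properties (i) and (ii') are immediate from the definition: setting $x_1=x_2=x$ kills the first summand and reduces the second to $f(x)d_Y(y_1,y_2)$, while setting $y_1=y_2=y$ kills the second summand and leaves $|x_1-x_2|$ uniformly in $y$, giving (ii') with $C=1$ on any interval around $x_0$. Among the metric axioms, symmetry and non-negativity are transparent, and separation of points follows since $d_M((x_1,y_1),(x_2,y_2))=0$ forces $|x_1-x_2|=0$, hence $x_1=x_2=:x$, and then $\min\{f(x_1),f(x_2)\}=f(x)>0$ forces $d_Y(y_1,y_2)=0$.

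The substantive content is the triangle inequality. Applying $|x_1-x_3|\leq|x_1-x_2|+|x_2-x_3|$ to the first summand, the task reduces to verifying
\[
\min\{f(x_1),f(x_3)\}\,d_Y(y_1,y_3) \leq |x_1-x_2|+|x_2-x_3| + \min\{f(x_1),f(x_2)\}\,d_Y(y_1,y_2) + \min\{f(x_2),f(x_3)\}\,d_Y(y_2,y_3).
\]
I would then split by the position of $x_2$, assuming WLOG $x_1\leq x_3$ so $\min\{f(x_1),f(x_3)\}=f(x_1)$ by monotonicity. If $x_2\geq x_1$, both $\min\{f(x_1),f(x_2)\}$ and $\min\{f(x_2),f(x_3)\}$ are at least $f(x_1)$, and the triangle inequality for $d_Y$ alone closes the argument, without any use of the $|x_i-x_j|$ additive terms. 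The delicate case is $x_2<x_1$, where both minima collapse to $f(x_2)<f(x_1)$; the deficit in the estimate is at most $(f(x_1)-f(x_2))\,d_Y(y_1,y_3)\leq(f(x_1)-f(x_2))\diam Y$, which by \eqref{global_Lip_hp} is bounded by $|x_1-x_2|$ and thus exactly absorbed by the extra additive term on the right-hand side.

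The only point worth flagging, rather than an obstacle proper, is that the Lipschitz normalization $|f(x_1)-f(x_2)|\leq|x_1-x_2|/\diam Y$ is precisely calibrated so that any drop in the multiplicative factor from $f(x_1)$ down to $\min\{f(x_1),f(x_2)\}=f(x_2)$ is paid for by the corresponding additive term $|x_1-x_2|$. Monotonicity of $f$ is used only to cut down the number of sub-cases: the increasing hypothesis guarantees $\min\{f(x_1),f(x_3)\}=f(\min\{x_1,x_3\})$, which is what allows the clean WLOG reduction above and prevents any configuration where $f(x_2)$ dominates both $f(x_1)$ and $f(x_3)$ from requiring a separate treatment.
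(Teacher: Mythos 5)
Your proof follows the same route as the paper's: reduce everything to the triangle inequality, order the endpoints so that monotonicity gives $\min\{f(x_1),f(x_3)\}=f(x_1)$, and split according to whether the intermediate coordinate $x_2$ lies above or below $x_1$, with the easy case handled by monotonicity of $f$ alone and the hard case by the calibration \eqref{global_Lip_hp}. The substance is right, but one bookkeeping step is stated incorrectly and should be repaired. The target is
\[
|x_1-x_3|+\min\{f(x_1),f(x_3)\}d_Y(y_1,y_3)\leq |x_1-x_2|+|x_2-x_3|+\min\{f(x_1),f(x_2)\}d_Y(y_1,y_2)+\min\{f(x_2),f(x_3)\}d_Y(y_2,y_3),
\]
and it does \emph{not} suffice to prove your displayed "reduced task", in which $|x_1-x_3|$ has simply been dropped from the left while $|x_1-x_2|+|x_2-x_3|$ is retained in full on the right: those additive terms are still needed to dominate $|x_1-x_3|$, so they cannot be spent twice. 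The additive budget genuinely available to absorb a deficit in the $d_Y$-part is only the slack $|x_1-x_2|+|x_2-x_3|-|x_1-x_3|$. Fortunately your case analysis proves more than your stated reduction asks for: when $x_2\geq x_1$ you establish $\min\{f(x_1),f(x_3)\}d_Y(y_1,y_3)\leq \min\{f(x_1),f(x_2)\}d_Y(y_1,y_2)+\min\{f(x_2),f(x_3)\}d_Y(y_2,y_3)$ with no additive help at all, and when $x_2<x_1\leq x_3$ the slack equals $2(x_1-x_2)$, while your deficit bound $(f(x_1)-f(x_2))\diam Y\leq x_1-x_2$ consumes only half of it. This is exactly the paper's inequality \eqref{claim_triang}, whose right-hand side $2(x_1-x_3)$ (the paper names the intermediate point $x_3$) is precisely that slack. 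So the argument closes once the stated reduction is replaced by the correct accounting; everything else (the metric axioms, property (i), and (ii') with $C=1$) is fine.
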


\begin{proof}
	We only need to verify that $d_M$ satisfies the triangle inequality. By symmetry we can assume without loss of generality that $x_1\leq x_2$, so that
	\[
	d_M((x_1,y_1),(x_2,y_2))=x_2-x_1+f(x_1)d_Y(y_1,y_2).
	\]
	Let us now consider a third point $(x_3,y_3)\in M$. If $x_3\geq x_1$, then
	\[
	f(x_1)\leq \min\{f(x_1),f(x_3)\}\quad\mbox{and}\quad f(x_1)\leq \min\{f(x_2),f(x_3)\},
	\]
	hence, exploiting also the triangle inequality of $|\,\cdot\,|$ and $d_Y$, we have
	\bgs{
		d_M((x_1,y_1)&,(x_2,y_2))=x_2-x_1+f(x_1)d_Y(y_1,y_2)\\
		&
		\leq
		x_3-x_1+|x_2-x_3|+f(x_1)d_Y(y_1,y_3)+f(x_1)d_Y(y_2,y_3)\\
		&
		\leq
		x_3-x_1+\min\{f(x_1),f(x_3)\}d_Y(y_1,y_3)+|x_2-x_3|+\min\{f(x_2),f(x_3)\}d_Y(y_2,y_3)\\
		&
		=d_M((x_1,y_1),(x_3,y_3))+d_M((x_2,y_2),(x_3,y_3)).
	}
	Suppose now that $x_3<x_1$, so that
	\[
	d_M((x_1,y_1),(x_3,y_3))=x_1-x_3+f(x_3)d_Y(y_1,y_3),
	\]
	and
	\[
	d_M((x_2,y_2),(x_3,y_3))=x_2-x_3+f(x_3)d_Y(y_2,y_3).
	\]
	Therefore the triangle inequality is equivalent to
	\begin{equation}\label{claim_triang}
		f(x_1)d_Y(y_1,y_2)-f(x_3)\big(d_Y(y_1,y_3)+d_Y(y_2,y_3)\big)\leq 2(x_1-x_3).
	\end{equation}
	Notice that by the triangle inequality of $d_Y$ we have
	\[
	-f(x_3)\big(d_Y(y_1,y_3)+d_Y(y_2,y_3)\big)\leq -f(x_3)d_Y(y_1,y_2).
	\]
	Thus, by exploiting also assumption \eqref{global_Lip_hp}, we obtain
	\bgs{
		f(x_1)d_Y(y_1,y_2)-f(x_3)\big(d_Y(y_1,y_3)+d_Y(y_2,y_3)\big)
		&
		\leq
		\big(f(x_1)-f(x_3)\big)d_Y(y_1,y_2)\\
		&
		\leq \frac{x_1-x_3}{\diam Y}\,\diam Y,
	}
	establishing \eqref{claim_triang} and concluding the proof of the Proposition.
\end{proof}

\begin{corollary}\label{cor_to0}
	Let $(Y,d_Y)$ be a metric space, $M:=\R_{>0}\times Y$, and let $M_0:=\{0\}\cup M$ be equipped with a distance function $d_{M_0}:M_0\times M_0\to[0,+\infty)$ such that the restriction of $d_{M_0}$ to $M$ satisfies points (i) and (ii) of Theorem \ref{Teo1}. If there exist at least two distinct points $p_1\not=p_2\in Y$ such that
		\begin{equation}\label{conv_mass0}
			\lim_{x\to0}d_{M_0}(0,(x,p_i))=0\qquad\mbox{for }i=1,2,
		\end{equation}
	then $\diam Y<+\infty$.
\end{corollary}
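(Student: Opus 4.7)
The plan is to argue by contradiction: assume $\diam Y = +\infty$. Since $X = \R_{>0}$ is connected and by hypothesis the restriction of $d_{M_0}$ to $M$ satisfies both (i) and (ii) of Theorem \ref{Teo1}, I can invoke that theorem directly to conclude that the fiber factor $f$ is identically equal to some constant $\lambda > 0$.

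Having pinned down $f \equiv \lambda$, I would extract the contradiction by combining property (i) at the two distinguished points $p_1, p_2 \in Y$ with the triangle inequality in $M_0$ routed through the zero element:
\[
\lambda\, d_Y(p_1, p_2) \;=\; d_{M_0}((x,p_1),(x,p_2)) \;\leq\; d_{M_0}(0,(x,p_1)) + d_{M_0}(0,(x,p_2)),
\]
valid for every $x > 0$. The left-hand side is a fixed positive number, since $\lambda > 0$ and $d_Y(p_1,p_2) > 0$, while the right-hand side tends to $0$ as $x \searrow 0$ by assumption \eqref{conv_mass0}. This is the desired contradiction, so $\diam Y < +\infty$ must hold.

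I do not anticipate any real obstacle in carrying this out: the entire argument is essentially a single triangle inequality once Theorem \ref{Teo1} has been applied. The only conceptual point worth flagging is why \eqref{conv_mass0} is required \emph{at two distinct points} of $Y$: with a single point $p$ one would obtain only the trivial estimate $0 \leq 0$, so no information on $d_Y$ could be extracted. Having two distinct $p_i$ is precisely what allows the fixed positive quantity $\lambda\, d_Y(p_1,p_2)$ to appear on the left-hand side and collide with the vanishing right-hand side.
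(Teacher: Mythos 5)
Your proof is correct and follows exactly the paper's own argument: contradiction via Theorem \ref{Teo1} to get $f\equiv\lambda$, then the triangle inequality through the zero element at the two points $p_1\not=p_2$. No differences worth noting.
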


\begin{proof}
	We argue by contradiction and we assume that $\diam Y=+\infty$. Then, by Theorem \ref{Teo1} we know that
	\[
	d_{M_0}((x,y_1),(x,y_2))=\lambda d_Y(y_1,y_2),
	\]
	for every $x\in\R_{>0}$ and every $y_1,y_2\in Y$, for some fixed constant $\lambda>0$.
	Let $x_k\searrow0$. Then, by \eqref{conv_mass0} we have
	\bgs{
		0< d_Y(p_1,p_2)&=\frac{1}{\lambda}d_{M_0}((x_k,p_1),(x_k,p_2))\\
		&
		\leq\frac{1}{\lambda}\big(d_{M_0}((x_k,p_1),0)+d_{M_0}(0,(x_k,p_2))\big)\xrightarrow{k\to\infty}0,
	}
	giving a contradiction.
\end{proof}	

\begin{lemma}\label{bded_lem}
	Let $(Y,d_Y)$ be a metric space, $M:=\R_{>0}\times Y$, and let $M_0:=\{0\}\cup M$ be equipped with a distance function $d_{M_0}:M_0\times M_0\to[0,+\infty)$. Suppose that there exist $x_0\in\R_{>0}$ and $\lambda_0>0$ such that
	\[
	d_{M_0}((x_0,y_1),(x_0,y_2))=\lambda_0 d_Y(y_1,y_2),
	\]
	for every $y_1,y_2\in Y$, and
	\[
	\Lambda:=\sup_{y\in Y}d_{M_0}((x_0,y),0)<+\infty.
	\]
	Then $\diam Y<+\infty$.
\end{lemma}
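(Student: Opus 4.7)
The proof should be essentially a one-step argument via the triangle inequality, passing through the zero element. My plan is the following.

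Fix any two points $y_1, y_2 \in Y$. By the hypothesis that $d_{M_0}$ restricted to the fiber $\{x_0\} \times Y$ equals $\lambda_0 d_Y$, I can write
\[
\lambda_0 d_Y(y_1, y_2) = d_{M_0}((x_0, y_1), (x_0, y_2)).
\]
Then I apply the triangle inequality of $d_{M_0}$ by inserting $0 \in M_0$ as an intermediate point, obtaining
\[
d_{M_0}((x_0, y_1), (x_0, y_2)) \leq d_{M_0}((x_0, y_1), 0) + d_{M_0}(0, (x_0, y_2)) \leq 2\Lambda,
\]
where the last inequality uses the uniform bound on the distances from the fiber $\{x_0\} \times Y$ to $0$. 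Combining the two displays and dividing by $\lambda_0 > 0$ yields
\[
d_Y(y_1, y_2) \leq \frac{2\Lambda}{\lambda_0},
\]
for every $y_1, y_2 \in Y$, which precisely means $\diam Y \leq 2\Lambda/\lambda_0 < +\infty$, as desired.

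There is no substantial obstacle here: the lemma is really just the observation that, once one fiber $\{x_0\} \times Y$ is an isometric copy of $(Y, \lambda_0 d_Y)$ and sits at bounded $d_{M_0}$-distance from a single point (namely $0$), it must itself have bounded diameter. Hence $Y$ is bounded.
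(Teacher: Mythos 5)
Your proof is correct and is essentially identical to the paper's: both pass through the zero element via the triangle inequality and conclude $\diam Y\leq 2\Lambda/\lambda_0$. Nothing to add.
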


\begin{proof}
	It is enough to notice that by the triangle inequality we have
	\bgs{
	d_Y(y_1,y_2)=\frac{1}{\lambda_0}d_{M_0}((x_0,y_1),(x_0,y_2))\leq
	\frac{1}{\lambda_0}\big(d_{M_0}((x_0,y_1),0)+d_{M_0}((x_0,y_2),0)\big),
}
	for every $y_1,y_2\in Y$.
	This implies that
	\[
	\diam Y\leq \frac{2\Lambda}{\lambda_0},
	\]
	concluding the proof.
\end{proof}

\begin{remark}\label{bded_rmk}
	If $\diam Y<+\infty$, then we can easily find many such distances $d_{M_0}$ on $M_0=\{0\}\cup\R_{>0}\times Y$.
	Indeed, let $f:\R_{>0}\to\R_{>0}$ be a Lipschitz, increasing, function such that
	\begin{equation*}
		\sup_{\substack{x_1,x_2\in\R_{>0}\\ x_1\not=x_2}}\frac{|f(x_1)-f(x_2)|}{|x_1-x_2|}\leq\frac{1}{\diam Y}\quad\mbox{and}\quad \lim_{x\searrow0}f(x)=0.
	\end{equation*}
	Define the function $d_{M_0}:M_0\times M_0\to[0,+\infty)$ by setting
	\bgs{
		d_{M_0}((x_1,y_1),(x_2,y_2)):=|x_1-x_2|+\min\{f(x_1),f(x_2)\}d_Y(y_1,y_2),
	}
	for every $(x_1,y_1),(x_2,y_2)\in \R_{>0}\times Y$
	and
	\[
	d_{M_0}(0,(x,y))=d_{M_0}((x,y),0):=x,\qquad d_{M_0}(0,0):=0.
	\]
	Then, by arguing as in Proposition \ref{bded_prop}, we can easily verify that $d_{M_0}$ is a distance function on $M_0$.
\end{remark}

\subsection{Proofs of the main results}\label{Proof_Sec}

We apply the results that we obtained in the previous Section to prove Theorem \ref{Teo_pWas}, Corollary \ref{Cor_pWas} and Proposition \ref{general_Teo}.

\begin{proof}[Proof of Theorem \ref{Teo_pWas}]
	We consider the metric space $(Y,d_Y):=(S,W_p)$, where $S\subseteq\mathcal P_c(\R^n)$ is the subspace given by
	\[
	S:=\{\delta_x\,:\,x\in\Sigma\},
	\]
	and the subspace
	\[
	M:=\{m\delta_x\,:\,x\in\Sigma,m>0\}\simeq\R_{>0}\times S\subseteq\mathcal M^*_c(\R^n).
	\]
	Then, $(M,d_\mathcal M)$ satisfies the hypothesis of Theorem \ref{Teo1}. Since $W_p(\delta_x,\delta_y)=|x-y|$ and $\Sigma$ is unbounded, also the space $(Y,d_Y)$ is not bounded. Thus, by Theorem \ref{Teo1} we conclude that $f\equiv\lambda$, as claimed.
\end{proof}

We observe that Corollary \ref{Cor_pWas}, with hypothesis \eqref{bded_0_eq}, means that we cannot extend a distance function $d_\mathcal M$ as in Theorem \ref{Teo_pWas} to a distance on the whole space $\mathcal M_c(\R^n)$, while preserving the boundedness assumption of point (ii) also around the zero measure.
Similarly, such an extension is not possible under the very weak continuity hypothesis \eqref{destr_mass_eq}---which, roughly speaking, translates the fact that when we fix any probability measure $\mu$ and we decrease its mass, considering $m\mu$, with $m\searrow0$, we expect to be ``approaching'' the zero measure. This can be seen, e.g., as a compatibility of $d_{\mathcal M_0}$ with the weak convergence $m\mu\rightharpoonup0$ as $m\searrow0$. A related result is given in Proposition \ref{isom_inv_dist_prop}, assuming invariance of the distance with respect to isometries.

\begin{proof}[Proof of Corollary \ref{Cor_pWas}]
	The claims of the Corollary follow by Lemma \ref{bded_lem} and by arguing as in the proof of Corollary \ref{cor_to0}.
\end{proof}

\begin{proof}[Proof of Proposition \ref{general_Teo}]
	The proof follows from Proposition \ref{bded_prop} and Remark \ref{bded_rmk}.	
\end{proof}

\section{Further results and examples}\label{last_Sect}

In this Section, we discuss other obstructions to the extension of the Wasserstein distances to general measures, based on invariance with respect to isometries.

We prove that there exists no distance function on $\mathcal M_c(\R^n)$ which is invariant with respect to isometries and which satisfies \eqref{fiber_condition_Wp} even on a single fiber $\{m_0\}\times\mathcal P_c(\R^n)$. More precisely:

\begin{proposition}[Unbounded space with zero, isometries invariance]\label{isom_inv_dist_prop}
	Let $p\in[1,+\infty)$. There exists no distance function $d$ on $\mathcal M_c(\R^n)$ such that
	\[
	d(\mu_1,\mu_2)=\lambda_0 W_p\Big(\frac{\mu_1}{m_0},\frac{\mu_2}{m_0}\Big),
	\]
	whenever $|\mu_1|=|\mu_2|=m_0$, for some $m_0\in\R_{>0}$ and $\lambda_0>0$, and which is invariant with respect to the isometries of $\R^n$, i.e.
	\begin{equation}\label{inv_eq}
		d(T\#\mu_1,T\#\mu_2)=d(\mu_1,\mu_2)
	\end{equation}
	for every isometry $T:\R^n\to\R^n$ and for every $\mu_1,\mu_2\in\mathcal M_c(\R^n)$.
\end{proposition}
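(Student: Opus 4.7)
The plan is to argue by contradiction, exploiting invariance under translations together with the trivial fact that the zero measure is fixed by every pushforward. The whole obstruction comes from the fact that $0 \in \mathcal M_c(\R^n)$ has a ``frozen'' orbit under the isometry group, while the Dirac masses of mass $m_0$ do not.

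First, I would observe that $T\#0=0$ for every Borel map $T\colon\R^n\to\R^n$, because the zero measure has no mass to push forward. Applied to the translation $T_x(y):=y+x$, which is an isometry and satisfies $T_x\#(m_0\delta_0)=m_0\delta_x$, the invariance assumption \eqref{inv_eq} gives
\[
d(0,m_0\delta_x)=d(T_x\#0,\,T_x\#(m_0\delta_0))=d(0,m_0\delta_0)=:C
\]
for every $x\in\R^n$. So the distance from the zero measure to any scaled Dirac $m_0\delta_x$ is a constant $C$, independent of the base point.

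Next, I would feed this into the triangle inequality through the zero measure. For any $x,y\in\R^n$, both $m_0\delta_x$ and $m_0\delta_y$ lie on the fiber $\{m_0\}\times\mathcal P_c(\R^n)$, so the fiber hypothesis yields
\[
d(m_0\delta_x,m_0\delta_y)=\lambda_0 W_p(\delta_x,\delta_y)=\lambda_0|x-y|,
\]
while
\[
d(m_0\delta_x,m_0\delta_y)\leq d(m_0\delta_x,0)+d(0,m_0\delta_y)=2C.
\]
Choosing $x,y$ with $|x-y|>2C/\lambda_0$ produces the desired contradiction.

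There is essentially no hard step here; the only nontrivial observation is the one in the first step, namely that the zero measure is a fixed point of every isometric pushforward, so that the orbit $\{(0,m_0\delta_x):x\in\R^n\}$ collapses to a single value of $d$, whereas the fiber formula forces distances on $\{m_0\delta_x:x\in\R^n\}$ to blow up linearly in $|x-y|$. Once this asymmetry is isolated, the remainder is a one-line application of the triangle inequality and the explicit formula $W_p(\delta_x,\delta_y)=|x-y|$, so I expect no real obstacle.
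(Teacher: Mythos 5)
Your proposal is correct and is essentially the paper's own argument: the paper likewise uses translation invariance together with $\tau_x\#0=0$ to conclude that $d(m_0\delta_x,0)$ is constant in $x$, and then derives the contradiction with the unboundedness of $x\mapsto\lambda_0|x-y|$ via the triangle inequality through the zero measure (which the paper packages as Lemma \ref{bded_lem} applied to $S=\{\delta_x:x\in\R^n\}$). The only difference is that you inline that lemma rather than cite it.
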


\begin{proof}
	Notice that for every $x\in\R^n$ the function $\tau_x:\R^n\to\R^n$ defined by $\tau_x(y):=y+x$ is an isometry, and $\delta_x=\tau_x\#\delta_0$.
	By \eqref{inv_eq} we thus obtain
	\bgs{
		d(m_0\delta_x,0)=d(m_0\tau_x\#\delta_0,\tau_x\#0)=d(\tau_x\#(m_0\delta_0),\tau_x\#0)=d(m_0\delta_0,0),
	}
	for every $x\in \R^n$. The conclusion then follows from Lemma \ref{bded_lem}, by considering $(Y,d_Y)=(S,W_p)$, with $S:=\{\delta_x\,:\,x\in\R^n\}$, which is not bounded, and $d_{M_0}=d$ on $M_0:=\{0\}\cup\R_{>0}\times S\subseteq\mathcal M_c(\R^n)$.
\end{proof}

Once again, in Proposition \ref{isom_inv_dist_prop} the obstruction to the existence of such a distance function $d$ is given by the combination of the presence of the zero measure with the unboundedness of $W_p$.
On a related note, we also observe that hypothesis (ii) in Theorem \ref{Teo_pWas} is weaker than the combination of other very natural assumptions, like the invariance with respect to isometries and the compatibility with weak convergence when varying the mass. Thus, we obtain the following:

\begin{proposition}[Unbounded space, isometries invariance]\label{final_prop}
	Let $p\in[1,+\infty)$ and let $d_\mathcal M:\mathcal M^*_c(\R^n)\times \mathcal M^*_c(\R^n)\to[0,+\infty)$ be a distance function such that
	\begin{equation*}%\label{in_fiber_cond_eq}
		d_\mathcal M(m\mu_1,m\mu_2)=f(m)W_p(\mu_1,\mu_2),
	\end{equation*}
	for every $m\in \R_{>0}$ and every $\mu_1,\mu_2\in \mathcal P_c(\R^n)$, for some function $f:\R_{>0}\to \R_{>0}$. Suppose that $d_\mathcal M$ is invariant with respect to the isometries of $\R^n$, i.e.
	\begin{equation}\label{hp1_prp}
		d_\mathcal M(T\#\mu_1,T\#\mu_2)=d_\mathcal M(\mu_1,\mu_2)
	\end{equation}
	for every isometry $T:\R^n\to\R^n$ and for every $\mu_1,\mu_2\in\mathcal M_c^*(\R^n)$. Assume moreover that
	\begin{equation}\label{hp2_prp}
		\lim_{|m-m_0|\to0}d_\mathcal M(m\mu,m_0\mu)=0,
	\end{equation}
	for every $m_0\in\R_{>0}$ and every $\mu\in\mathcal P_c(\R^n)$. Then, $f\equiv\lambda$, for some $\lambda>0$.	
\end{proposition}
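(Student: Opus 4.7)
The plan is to reduce Proposition \ref{final_prop} directly to Theorem \ref{Teo_pWas} by verifying hypothesis (ii) with the unbounded subset $\Sigma := \R^n$. Hypothesis (i) of Theorem \ref{Teo_pWas} is given for free, so the entire task is to produce, for every $m_0 \in \R_{>0}$, a radius $r > 0$ and a constant $C > 0$ such that $\sup_{x \in \R^n} d_\mathcal M(m_0\delta_x, m\delta_x) \leq C$ whenever $|m - m_0| < r$.

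First I would exploit isometry invariance to kill the dependence on $x$. For each $x \in \R^n$ the translation $\tau_x(y) := y + x$ is an isometry of $\R^n$, and $\delta_x = \tau_x \# \delta_0$, so pushing forward preserves the Dirac measure structure under scaling: $m\delta_x = \tau_x \# (m\delta_0)$. Applying \eqref{hp1_prp} with $\mu_1 = m_0 \delta_0$ and $\mu_2 = m \delta_0$ yields
\[
d_\mathcal M(m_0 \delta_x, m \delta_x) = d_\mathcal M(\tau_x \#(m_0 \delta_0), \tau_x \#(m \delta_0)) = d_\mathcal M(m_0 \delta_0, m \delta_0),
\]
so the supremum over $x \in \R^n$ collapses to a single quantity depending only on $m$ and $m_0$.

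Next I would invoke the continuity hypothesis \eqref{hp2_prp} applied with the fixed probability measure $\mu = \delta_0$: we have $\lim_{|m - m_0| \to 0} d_\mathcal M(m\delta_0, m_0 \delta_0) = 0$. Hence, given for instance the threshold $1$, there exists $r = r(m_0) > 0$ such that $d_\mathcal M(m\delta_0, m_0 \delta_0) \leq 1$ for every $m \in (m_0 - r, m_0 + r)$ (shrinking $r$ if necessary so that this interval stays inside $\R_{>0}$). Combining with the previous step,
\[
\sup_{x \in \R^n} d_\mathcal M(m_0 \delta_x, m \delta_x) = d_\mathcal M(m_0 \delta_0, m \delta_0) \leq 1 =: C
\]
for every $m$ in that interval, which is precisely hypothesis (ii) of Theorem \ref{Teo_pWas} with $\Sigma = \R^n$ (clearly unbounded). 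An application of Theorem \ref{Teo_pWas} then forces $f \equiv \lambda$ for some $\lambda > 0$, finishing the proof.

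There is essentially no serious obstacle here; the proposition is a packaging result. The only conceptual point to emphasize is that isometry invariance is what upgrades the pointwise continuity statement \eqref{hp2_prp} (formulated for a single $\mu$) into the spatially uniform bound demanded by condition \eqref{in_unif_mass_cost_eq}, and that this uniformity is the only missing ingredient for reducing to the already-established Theorem \ref{Teo_pWas}.
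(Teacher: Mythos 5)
Your proposal is correct and follows exactly the same route as the paper: translation invariance collapses $\sup_{x\in\R^n}d_\mathcal M(m_0\delta_x,m\delta_x)$ to $d_\mathcal M(m_0\delta_0,m\delta_0)$, the continuity hypothesis \eqref{hp2_prp} with $\mu=\delta_0$ bounds this by $1$ for $m$ near $m_0$, and Theorem \ref{Teo_pWas} with $\Sigma=\R^n$ concludes. No differences worth noting.
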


\begin{proof}
	Fix any $m_0\in\R_{>0}$. By the continuity hypothesis in \eqref{hp2_prp}, we know in particular that there exists $r>0$, depending only on $m_0$, such that
	\[
	d_\mathcal M(m\delta_0,m_0\delta_0)\leq 1,
	\]
	for every $m\in(m_0-r,m_0+r)$. Moreover, exploiting the isometric invariance \eqref{hp2_prp} and arguing as in the proof of Proposition \ref{isom_inv_dist_prop}, we have
	\bgs{
		d_\mathcal M(m\delta_x,m_0\delta_x)&=d_\mathcal M(m\tau_x\#\delta_0,m_0\tau_x\#\delta_0)=d_\mathcal M(\tau_x\#(m\delta_0),\tau_x\#(m_0\delta_0))\\
		&
		=d_\mathcal M(m\delta_0,m_0\delta_0),
	}
	for every $x\in\R^n$. Therefore, $d_\mathcal M$ satisfies hypothesis (ii) in Theorem \ref{Teo_pWas}, with $\Sigma=\R^n$, and the conclusion follows.
\end{proof}

\subsection{Examples}
The simplest examples of distance functions defined on $\mathcal M_c^*(\R^n)$ which satisfy \eqref{fiber_condition_Wp} (with $f$ constant) are given by product metrics. Indeed, if $d$ is any distance function on $(0,+\infty)$, then
\[
d_\mathcal M(\mu_1,\mu_2):=\sqrt{d^2(|\mu_1|,|\mu_2|)+\lambda^2 W_p^2\Big(\frac{\mu_1}{|\mu_1|},\frac{\mu_2}{|\mu_2|}\Big)},
\]
is such a distance function.
Of particular interest are the cases given in Example \ref{exe_prod} and Example \ref{discont_exe} here below, which also satisfy the boundedness condition \eqref{in_unif_mass_cost_eq}.

\begin{example}\label{exe_prod}
	Let $q\in[1,+\infty)$ and define
	\[
	d_{\mathcal M,q}(\mu_1,\mu_2):=\left(\big||\mu_1|-|\mu_2|\big|^q+\lambda^q W_p^q\Big(\frac{\mu_1}{|\mu_1|},\frac{\mu_2}{|\mu_2|}\Big)\right)^\frac{1}{q},
	\]
	and also
	\[
	d_{\mathcal M,\infty}(\mu_1,\mu_2):=\max\left\{\big||\mu_1|-|\mu_2|\big|,\lambda W_p\Big(\frac{\mu_1}{|\mu_1|},\frac{\mu_2}{|\mu_2|}\Big)\right\},
	\]
	for every $\mu_1,\mu_2\in\mathcal M_c^*(\R^n)$. Then, $d_{\mathcal M,q}$ is a distance function on $\mathcal M_c^*(\R^n)$ satisfying points (i) and (ii) of Theorem \ref{Teo_pWas}, for every $q\in[1,+\infty]$. Actually, each $d_{\mathcal M,q}$ is such that
	\[
	\sup_{\mu\in\mathcal P_c(\R^n)}d_{\mathcal M,q}(m_0\mu,m\mu)= |m_0-m|,
	\]
	for every $m,m_0\in\R_{>0}$, which is more restrictive than point (ii).
\end{example}

\begin{example}\label{discont_exe}
	Let $d$ be a bounded distance function on $(0,+\infty)$, with $d(m_1,m_2)\leq C_d$ for every $m_1,m_2\in(0,+\infty)$. Then,
	\[
	d_\mathcal M(\mu_1,\mu_2):=d(|\mu_1|,|\mu_2|)+\lambda W_p\Big(\frac{\mu_1}{|\mu_1|},\frac{\mu_2}{|\mu_2|}\Big),
	\]
	is a distance function on $\mathcal M^*_c(\R^n)$, satisfying point (i) of Theorem \ref{Teo_pWas} and the global boundedness property
	\[
	\sup_{\mu\in\mathcal P_c(\R^n)}d_\mathcal M(m_0\mu,m\mu)\leq C_d,
	\]
	for every $m_0,m\in(0,+\infty)$, which is clearly stronger than point (ii). In particular, by considering as $d$ the discrete metric on $(0,+\infty)$, we obtain an example of distance function $d_\mathcal M$ for which we do not have continuity in the mass, in the sense that
	\begin{equation*}%\label{cont_fiber_mass}
		\lim_{|m- m_0|\to0}d_\mathcal M(m_0\mu,m\mu)=0,
	\end{equation*}
 does not hold true for any $\mu\in\mathcal P_c(\R^n)$.
\end{example}

Other less trivial examples can be obtained as the so-called warped product metrics, by exploiting the fact that $(\mathcal P_c(\R^n),W_p)$ is a geodesic space, see \cite{villani2}.

\begin{example}[Warped products]\label{warped_exe}
	Let $d_{\mathcal M,2}$ be the product metric on $\mathcal M_c^*(\R^n)$ defined in Example \ref{exe_prod}. Given a Lipschitz continuous curve $\gamma:I\to(\mathcal M_c^*(\R^n),d_{\mathcal M,2})$, where $I:=[0,1]\subseteq\R$, we denote by $m_\gamma:I\to\R_{>0}$ and $\rho_\gamma:I\to(\mathcal P_c(\R^n),W_p)$ its projections, defined respectively as
	\[
	m_\gamma(t):=|\gamma(t)|\quad\mbox{and}\quad\rho_\gamma(t):=\frac{\gamma(t)}{|\gamma(t)|},
	\]
	which are Lipschitz continuous curves. Let $\Gamma$ denote the set of all such curves.
	Given a continuous function $g:(\mathcal P_c(\R^n),W_p)\to\R_{>0}$, we define the warped product metric, with warping function $g$, as
	\[
	d_{\mathcal M,g}(\mu_1,\mu_2):=\inf_{\substack{\gamma\in\Gamma\\ \gamma(0)=\mu_1,\gamma(1)=\mu_2}}\int_I\sqrt{g^2(\rho_\gamma(t))|m_\gamma'(t)|^2+|\rho_\gamma'(t)|^2}\,dt.
	\]
	Here above we have denoted, with a slight abuse of notation,
	\[
	|\rho_\gamma'(t)|:=\lim_{s\to0}\frac{W_p(\rho_\gamma(t),\rho_\gamma(t+s))}{|s|},
	\]
	which exists for almost every $t\in I$, since $\rho_\gamma$ is Lipschitz, see, e.g., \cite[Theorem 2.7.6]{BBI}. This definition can be found, e.g., in \cite[Section 3.6.4]{BBI}, and it is easy to check that it coincides with the one given in \cite[Section 3.1]{Warped}.
	By \cite[Proposition 3.1]{Warped}, $d_{\mathcal M,g}$ is a distance function on $\mathcal M_c^*(\R^n)$, and, by \cite[Lemma 3.2]{Warped}, it satisfies \eqref{fiber_condition_Wp}, as indeed
	\begin{equation}\label{eqn_wrp}
	d_{\mathcal M,g}(\mu_1,\mu_2)=W_p\Big(\frac{\mu_1}{|\mu_1|},\frac{\mu_2}{|\mu_2|}\Big),
	\end{equation}
	for every $\mu_1,\mu_2\in\mathcal M_c^*(\R^n)$ such that $|\mu_1|=|\mu_2|$. It is easy to find examples of warping functions $g$ for which the corresponding distance $d_{\mathcal M,g}$ does not satisfy point (ii) of Theorem \ref{Teo_pWas}, such as $g(\mu):=1+W_p(\mu,\delta_0)$. Similarly, if we consider the warping function
	\begin{equation*}%\label{warp_exe_unbded}
		g(\mu):=1+\inf_{x\in\R^n}W_p(\mu,\delta_x),
	\end{equation*}
	then, $d_{\mathcal M,g}$ satisfies point (ii) of Theorem \ref{Teo_pWas}, with $\Sigma=\R^n$. However, since we can find a sequence $\{\mu_k\}_{k\in\N}\subseteq\mathcal P_c(\R^n)$ such that $g(\mu_k)\geq k$ for every $k$, it is easy to verify that \eqref{in_unif_mass_cost_eq} does not hold on the whole of $\mathcal P_c(\R^n)$. That is, there exist no $m_0,r,C>0$ for which
	\[
	\sup_{\mu\in\mathcal P_c(\R^n)}d_{\mathcal M,g}(m_0\mu,m\mu)\leq C,
	\]
	for every $m\in(m_0-r,m_0+r)$.
	
	Furthermore, we observe that for every choice of warping function $g$, the distance $d_{\mathcal M,g}$ is compatible with weak convergence when varying the mass, as indeed
	\[
	d_{\mathcal M,g}(m_0\mu,m\mu)\leq g(\mu)|m-m_0|,
	\]
	for every $\mu\in\mathcal P_c(\R^n)$. Nevertheless, since \eqref{eqn_wrp} holds true, by arguing as in the proof of Corollary \ref{cor_to0} we see that no such distance can be extended to a distance $d$ defined on the whole of $\mathcal M_c(\R^n)$ and with the property that
	\[
	\lim_{m\searrow0}d(0,m\mu_i)=0,
	\]
	for at least two different $\mu_1\not=\mu_2\in\mathcal P_c(\R^n)$.
\end{example}

%    Bibliographies can be prepared with BibTeX using amsplain,
%    amsalpha, or (for "historical" overviews) natbib style.
\bibliographystyle{amsplain}
%    Insert the bibliography data here.
\bibliography{biblio}

\end{document}